\newtheorem{theorem}{Theorem}[section]
\newtheorem{definition}[theorem]{Definition}
\newtheorem{lemma}[theorem]{Lemma}
\newenvironment{proof}[1][Proof]{\noindent\textbf{#1.} }{\
\rule{0.5em}{0.5em}}
\begin{document}

\pagestyle{fancy}
\fancyhead{} 
\fancyhead[EC]{\small\it P.P.
Ntumba}%
\fancyhead[EL,OR]{\thepage} \fancyhead[OC]{\small\it
On the endomorphisms of some sheaves of functions}%
\fancyfoot{} 
\renewcommand\headrulewidth{0.5pt}
\addtolength{\headheight}{2pt} 

\title{\textbf{On the endomorphisms of some sheaves of functions}}
\author{P.P. Ntumba\footnote{I am indebted to Professor P. Schapira for his many constructive remarks, which led to the present form of the article.}}

\date{}
\maketitle

\begin{abstract}
Given a $C^\infty$ real manifold $X$ and $\mathcal{C}^m_X$ its sheaf of  $m$-times differentiable real-valued functions, we prove that the sheaf $\mathcal{D}^{m, r}_X$ of differential operators of order $\leq m$ with coefficient functions of class $C^r$ can be obtained in terms of the sheaf $\mathcal{H}om_{\mathbb{R}_X}(\mathcal{C}^m_X, \mathcal{C}^r_X)$ of morphisms of $\mathcal{C}^m_X$ into $\mathcal{C}^r_X$. The superscripts $m$ and $r$ are integers.
\end{abstract}

{\it Key Words}: Distributions, support of a function, sheaf of functions, sheaf of differential operators, partition of unity, functor $\mathcal{H}om$.

\section{Statement of the main theorem}

In this note, we denote by $X$ a real $C^\infty$ manifold and by $\mathcal{C}^m_X$ the \textit{sheaf of real-valued functions of class $C^m$}, on $X$, with $m\in \mathbb{N}\cup \{0\}$. On the other hand, we denote by $\mathcal{D}^{m, r}_X$, where $0\leq m< \infty$ and $0\leq r< \infty$, the \textit{sheaf of differential operators of order $\leq m$ with coefficients of class $C^r$}. Note that $\mathcal{D}^{0, r}_X= \mathcal{C}^r_X$, for any nonnegative integer $r$. Recall that a section ${P}$ on $\mathcal{D}^{m, r}_X$ may be written locally in a local chart as (see \cite[p.13]{hormander})
\begin{equation}
\label{eq6}
{P}= \sum_{\vert\alpha\vert\leq m}a_\alpha(x)\partial_X^\alpha,
\end{equation}
where $a_\alpha$ are real-valued functions of class $C^r$. In the equation (\ref{eq6}), we have used the following classical notation: $\partial_i:= \frac{\partial}{\partial x_i}$, where $i=1, \ldots, n=\dim X$, $\alpha:= (\alpha_1, \ldots, \alpha_n)$ is a multi-index, and $\partial^\alpha:= \partial^{\alpha_1}_1\cdots \partial^{\alpha_n}_n$. Furthermore, we also set, by classical convention, that $\vert\alpha\vert:= \sum \alpha_i$ and $\alpha!:= \alpha_1!\cdots \alpha_n!$.

\begin{definition}
\label{definition1}
\emph{For any $x_0\in X$, we denote by $\mathfrak{M}^m_{x_0}\subseteq \mathcal{C}^m_X$ the subsheaf of $\mathcal{C}^m_X$, given by
\[
\mathfrak{M}_{x_0}^m(U)= \left\{\begin{array}{ll} \mathcal{C}^m_X(U), \ \ \mbox{if $x_0\notin U$} & {}\\
\varphi\in \mathcal{C}^m_X(U):\ \mbox{for all $\vert\alpha\vert\leq m$, $(\partial^\alpha_U\varphi)(x_0)=0$}, & \mbox{if $x_0\in U$.}
\end{array}\right.
\]}
\end{definition}

Notice that in the equation $(\partial^\alpha_U\varphi)(x_0)=0$ of Definition \ref{definition1}, we have assumed that $U$ is a local chart of $x_0$.


For any nonnegative integers $m$ and $r$ such that $m\geq r$, the morphism
\begin{equation}
\label{eq7}
\begin{array}{ll}
\mathcal{D}^{m-r, r}_X\longrightarrow \mathcal{H}om_{\mathbb{R}_X}(\mathcal{C}^m_X, \mathcal{C}^r_X), & P\longmapsto (\varphi\longmapsto P(\varphi))
\end{array}
\end{equation}
is called the \textit{natural morphism of $\mathcal{D}^{m-r, r}_X$ into $\mathcal{H}om_{\mathbb{R}_X}(\mathcal{C}^m_X, \mathcal{C}^r_X)$.} On the other hand, we set that \textit{if $m-r<0$, then $\mathcal{D}^{m-r, r}=0.$}

This is our main theorem.

\begin{theorem}
\label{theorem3}
For any $m-r\in \mathbb{Z}\cup \{-\infty\}$, the morphism $(\ref{eq7})$ is an isomorphism.
\end{theorem}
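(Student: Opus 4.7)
The plan is to split the proof into three pieces: injectivity, construction of candidate coefficient functions on a chart, and a jet-vanishing key lemma which is the main obstacle.

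\textbf{Injectivity.} Suppose $P\in\mathcal{D}^{m-r,r}_X(U)$ induces the zero morphism, and work in a coordinate patch around an arbitrary $x_0\in U$; write $P=\sum_{|\alpha|\le m-r}a_\alpha\partial^\alpha$. For each multi-index $|\beta|\le m-r$, apply $P$ to the $\mathcal{C}^m$-section $\chi\cdot (x-x_0)^\beta/\beta!$, where $\chi$ is a smooth bump equal to $1$ near $x_0$ and compactly supported in the patch. Evaluating at $x_0$ and using $\partial^\alpha\bigl((x-x_0)^\beta/\beta!\bigr)(x_0)=\delta_{\alpha,\beta}$ forces $a_\beta(x_0)=0$. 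Since $x_0$ is arbitrary, $P=0$.

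\textbf{Defining the coefficients.} For surjectivity, let $\phi$ be a section of $\mathcal{H}om_{\mathbb{R}_X}(\mathcal{C}^m_X,\mathcal{C}^r_X)$ on a chart $V$. For each $|\alpha|\le m-r$, I would define
\[
a_\alpha(x_0)\;:=\;\phi\bigl(\tau^{(x_0)}_\alpha\bigr)(x_0),\qquad\tau^{(x_0)}_\alpha(y):=(y-x_0)^\alpha/\alpha!.
\]
Expanding $(y-x_0)^\alpha$ by the multinomial theorem and using the $\mathbb{R}$-linearity of $\phi$, one obtains $a_\alpha(x_0)=\sum_\beta c_\beta(x_0)\,\phi(y^\beta)(x_0)$, where $c_\beta$ is polynomial in $x_0$ and each $\phi(y^\beta)$ lies in $\mathcal{C}^r(V)$. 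Hence $a_\alpha\in\mathcal{C}^r(V)$, and we set $P_V:=\sum_{|\alpha|\le m-r}a_\alpha\partial^\alpha$.

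\textbf{Reduction to the key lemma.} Everything hinges on the claim: for every $\psi\in\mathcal{C}^m_X(V)$ with $(\partial^\alpha\psi)(x_0)=0$ for all $|\alpha|\le m-r$, one has $\phi(\psi)(x_0)=0$. Granting this, the decomposition
\[
\varphi\;=\;\sum_{|\alpha|\le m-r}(\partial^\alpha\varphi)(x_0)\cdot\tau^{(x_0)}_\alpha\;+\;\widetilde\varphi,
\]
in which $\widetilde\varphi$ has vanishing $(m-r)$-jet at $x_0$, combined with $\mathbb{R}$-linearity yields $\phi(\varphi)(x_0)=(P_V\varphi)(x_0)$ at every $x_0\in V$, so $\phi=P_V$ on $V$. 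The local operators $P_V$ then glue, by the already-proved injectivity, to a global section of $\mathcal{D}^{m-r,r}_X$. For $m-r<0$ the hypothesis of the key lemma becomes vacuous and forces $\phi\equiv 0$, consistent with the convention $\mathcal{D}^{m-r,r}_X=0$.

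\textbf{The main obstacle.} The hard part is the key lemma. My approach is first to exploit the sheaf property of $\phi$, replacing $\psi$ by $\chi\psi$ for an arbitrarily concentrated bump $\chi$ and thus reducing to $\psi$ supported in an arbitrarily small neighborhood $W$ of $x_0$. When $r\ge 1$, iterating Hadamard's lemma $m-r+1$ times writes $\psi=\sum_{|\gamma|=m-r+1}(y-x_0)^\gamma g_\gamma$ with $g_\gamma\in\mathcal{C}^{r-1}(W)$. Since $\phi$ is only $\mathbb{R}$-linear, not $\mathcal{C}^\infty$-linear, the polynomial factors cannot be pulled out directly; a Peetre-type contradiction argument is therefore required, constructing disjoint-support translates $\psi_n$ concentrated near a sequence $x_n\to x_0$ whose locally-finite sum $\Psi\in\mathcal{C}^m_X(V)$ would, by the sheaf property, force $\phi(\Psi)$ to violate its required $\mathcal{C}^r$-regularity at $x_0$ unless $\phi(\psi)(x_0)=0$. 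The case $r=0$ is subtler still, since Hadamard would demand one extra degree of smoothness that is not available; there only the qualitative Taylor estimate $\psi(x)=o(|x-x_0|^m)$ can be used, and the contradiction must be extracted directly from the $\mathcal{C}^0$-continuity of $\phi(\psi)$.
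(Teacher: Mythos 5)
Your reduction of the theorem to the jet-vanishing claim is sound and closely parallels the paper's own architecture: the injectivity argument via localized monomials, the recovery of the coefficients $a_\alpha$ by applying $\phi$ to polynomials (the paper does this by a triangular induction on $\vert\alpha\vert$ using the monomials $x^\alpha$ rather than your centred $\tau^{(x_0)}_\alpha$, but the two are equivalent), and the Taylor decomposition $\varphi = q + \widetilde\varphi$ all appear in the paper in essentially the form you give. The problem is that the entire content of the theorem sits in your ``key lemma,'' and you do not prove it: you name two candidate strategies (iterated Hadamard factorization followed by a Peetre-type disjoint-translates contradiction when $r\geq 1$, and an unspecified continuity argument when $r=0$) and explicitly defer their execution. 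As written this is a proof of the reduction, not of the theorem. Worse, your plan makes the key lemma carry even the case $m-r<0$, where its hypothesis is vacuous and the ``lemma'' is literally the assertion $\mathcal{H}om_{\mathbb{R}_X}(\mathcal{C}^m_X, \mathcal{C}^r_X)=0$; neither sketched strategy says anything in that regime, since there is no jet left to exploit. Similarly, at $m=r$ you would need the $0$-jet version ($\psi(x_0)=0$ implies $\phi(\psi)(x_0)=0$), which a Taylor or Hadamard estimate of order $m-r=0$ does not hand you.

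For contrast, the paper proves the jet-vanishing statement only in the pivotal case $r=0$ (Lemma \ref{lemma2}), and by a device considerably more elementary than Peetre's construction: take a partition of unity $\{\varphi_1,\varphi_2\}$ on the unit sphere, extend it conically by $\psi_i(x)=\varphi_i(x/\Vert x\Vert)$, and split $\varphi=\psi_1\varphi+\psi_2\varphi$. Each $\psi_i\varphi$ vanishes, together with its derivatives up to order $m$, on a closed cone with vertex at the origin; locality of the sheaf morphism $u$ forces $u(\psi_i\varphi)$ to vanish on the open cone, continuity of $u(\psi_i\varphi)$ extends this to the vertex, and linearity gives $u(\varphi)(0)=0$. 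The cases $m-r<0$ and $m=r$ are then settled by separate direct arguments (a $C^m$ function that is not $C^{m+1}$ cannot be sent into $\mathcal{C}^{m+1}_X$ by a nonzero operator; linear independence of a function and its partial derivatives kills the higher coefficients when $m=r$) rather than by one uniform jet lemma. To repair your outline you must either carry out the Peetre-type argument in full --- including precisely the cases $r=0$ and $m-r\leq 0$ that your sketch concedes are the hard ones --- or replace the proof of the key lemma by the conical partition-of-unity argument and treat the remaining cases separately as the paper does.
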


Theorem \ref{theorem3} associates naturally with Peetre's theorem (\cite{peetre1, peetre2}), which states the following: \textit{Let $\Omega$ be an open set in $\mathbb{R}^n$, $\mathcal D$ the sheaf of $C^\infty$ functions on $\mathbb{R}^n$, and $\mathcal{D}^\prime$ the sheaf of distributions on $\mathbb{R}^n$. Moreover, let $P: \mathcal{D}(\Omega)\longrightarrow \mathcal{D}^\prime(\Omega)$ be a linear map such that 
\begin{equation}
\label{eq18}
supp(Pf)\subseteq supp(f),
\end{equation}
and let $\Lambda$ be the set of points of discontinuity of $P$. Then, there exists a family of distributions $\{a^\alpha\}$, locally finite and unique outside of $\Lambda$, such that 
\[
supp(Pf-\sum a^\alpha D_\alpha f)\subseteq \Lambda,
\]
for every $f\in \mathcal{D}(\Omega)$.}

This theorem gives rise to another theorem, also proved by Peetre in \cite{peetre2}: \textit{Let $H$ be a subspace of $\mathcal{D}^\prime(\Omega)$, closed under multiplication with elements of $\mathcal{D}(\Omega)$, and assume that $H$ has no elements with finite local support. Moreover, let $P: \mathcal{D}(\Omega)\longrightarrow H$ be a linear map satisfying (\ref{eq18}). Then, there exists a unique family of distributions $\{a^\alpha\}$, locally finite and locally in $H$, such that 
\[
Pf= \sum a^\alpha D_\alpha f,
\]
for every $f\in \mathcal{D}(\Omega)$.}

\section{Proof of Theorem \ref{theorem3}}

First, let us recall the following classical result (see, for instance \cite[p.5, Lemma 1.1.1]{jost}).

\begin{lemma}
\label{theorem1}
Let $\{U_i\}_{i\in I}$ be a finite open covering of the unit sphere $S^{n-1}\subseteq \mathbb{R}^n$. Then, there exists a family of nonnegative real-valued maps of class $C^\infty$ $\varphi_i: S^{n-1}\longrightarrow \mathbb{R}$, such that
\begin{enumerate}
\item [{$(i)$}] $\mbox{supp $\varphi_i\subseteq U_i$, for all $i$,}$
\item [{$(ii)$}] $0\leq \varphi_i(x)\leq 1$, $\mbox{for all $x\in S^{n-1}$, $i\in I$}$
\item [{$(iii)$}] $\sum_{i\in I}\varphi_i(x)=1,$ $\mbox{for all $x\in S^{n-1}$}$.
\end{enumerate}
\end{lemma}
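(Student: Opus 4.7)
The plan is to exploit the compactness of $S^{n-1}$ together with the finiteness of the cover $\{U_i\}_{i\in I}$ and the standard machinery of smooth bump functions on $\mathbb{R}^n$. The three-step strategy is: (a) shrink the given cover to a cover $\{V_i\}_{i\in I}$ satisfying $\overline{V_i}\subseteq U_i$; (b) construct, for each $i$, a nonnegative $C^\infty$ function $\psi_i$ on $S^{n-1}$ which is strictly positive on $\overline{V_i}$ and whose support lies inside $U_i$; (c) normalize by setting $\varphi_i:=\psi_i/\sum_{j\in I}\psi_j$.

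For the shrinking in step (a), I would proceed as follows. For each $x\in S^{n-1}$, fix an index $i(x)\in I$ with $x\in U_{i(x)}$, and choose a chart neighborhood $W_x$ of $x$ whose closure is contained in $U_{i(x)}$; this is possible because $S^{n-1}$, being a compact metric (hence normal, locally compact Hausdorff) space, admits such precompact refinements. By compactness of $S^{n-1}$, extract a finite subcover $\{W_{x_1},\ldots,W_{x_N}\}$, and define
\[
V_i:=\bigcup_{k:\, i(x_k)=i}W_{x_k}.
\]
Then $\{V_i\}_{i\in I}$ still covers $S^{n-1}$, and $\overline{V_i}\subseteq U_i$ since the union defining $V_i$ is finite.

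For step (b), the ingredient is the classical Euclidean bump function built from $f(t)=\exp(-1/t)$ for $t>0$ (and $0$ otherwise). Pulling this back through a local chart, for each point $y\in \overline{V_i}$ one produces a nonnegative $C^\infty$ function $\beta_y:S^{n-1}\to\mathbb{R}$ with $\beta_y(y)>0$ and $\mathrm{supp}\,\beta_y$ contained in a small chart ball sitting inside $U_i$. Covering the compact set $\overline{V_i}$ by finitely many of the open sets $\{\beta_y>0\}$ and summing the corresponding $\beta_{y_\ell}$ yields $\psi_i$, which is $C^\infty$, nonnegative, strictly positive on $\overline{V_i}$, and supported in $U_i$. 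Step (c) is then immediate: because every point of $S^{n-1}$ lies in some $V_j$, the sum $\Psi:=\sum_{j\in I}\psi_j$ is a strictly positive $C^\infty$ function, so $\varphi_i:=\psi_i/\Psi$ is well-defined, $C^\infty$, nonnegative, bounded above by $1$, supported in $\mathrm{supp}\,\psi_i\subseteq U_i$, and satisfies $\sum_i\varphi_i\equiv 1$.

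The step requiring the most care is the shrinking in (a), for without a cover $\{V_i\}$ with $\overline{V_i}\subseteq U_i$ one cannot guarantee simultaneously that the support condition (i) holds and that the denominator $\Psi$ is strictly positive everywhere. This is the point at which compactness of $S^{n-1}$ and the Hausdorff/normality properties of the ambient manifold enter in an essential way. Once the shrunken cover is secured, the remainder is a routine application of smooth bump functions and a division.
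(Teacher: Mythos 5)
Your argument is correct: the shrinking of the cover via compactness, the chart-pulled-back exponential bump functions, and the normalization by the strictly positive sum $\Psi$ together give exactly the three properties $(i)$--$(iii)$. Note that the paper offers no proof of this lemma at all --- it is quoted as a classical result with a reference to Jost --- and your construction is precisely the standard textbook proof of that cited result, so there is nothing to compare beyond saying you have supplied the omitted argument correctly.
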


Still assuming the notations of Lemma \ref{theorem1}, for every $i\in I$, we let $\psi_i: \mathbb{R}^n\setminus \{0\}\longrightarrow \mathbb R$ be the map given by
\begin{equation}
\label{eq4}
\psi_i(x)= \varphi_i\big(\frac{x}{||x||}\big).
\end{equation}
Clearly, $\psi_i$ is $C^\infty$ on $\mathbb{R}^n\setminus \{0\}$ and $\vert\vert\partial^\alpha\psi_i\vert\vert$ is bounded for any $\vert\alpha\vert<\infty.$
Next, let $m\in \mathbb{N}$ and $\varphi: \mathbb{R}^n\longrightarrow \mathbb R$ be a $C^m$ real-valued function such that $\varphi(0)=0$. For every $i\in I$, set
\begin{equation}
\label{eq2}
(\psi_i\varphi)(x)= \left\{\begin{array}{ll}\psi_i(x)\varphi(x), & \mbox{if $x\neq 0$}\\ 0, & \mbox{if $x=0$.}\end{array}\right.
\end{equation}
It is clear that
\begin{equation}
\label{eq1}
\varphi= \sum_{i\in I}\psi_i\varphi.
\end{equation}

Thus, we have

\begin{lemma}
\label{lemma1}
Let $U$ be an open neighborhood of $0$ in $\mathbb{R}^n$. For any nonnegative integer $m$, if $\varphi\in \mathfrak{M}^m_0(U)$, then ${\psi_i}\varphi\in \mathfrak{M}^0_0(U),$ where the $\psi_i$ are the functions \emph{(\ref{eq4})}.
\end{lemma}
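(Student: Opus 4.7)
The plan is to verify, directly from the definitions, the two conditions that membership in $\mathfrak{M}^0_0(U)$ amounts to (when $0\in U$): namely, that $\psi_i\varphi$ lies in $\mathcal{C}^0_X(U)$, and that $(\psi_i\varphi)(0)=0$.

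First I would dispose of the easy part. On $U\setminus\{0\}$, the product $\psi_i\varphi$ is the product of $\psi_i$, which is $C^\infty$ on $\mathbb{R}^n\setminus\{0\}$ by the construction in \eqref{eq4}, and $\varphi$, which is $C^m$ on $U$ and hence in particular continuous. So $\psi_i\varphi$ is continuous on $U\setminus\{0\}$. The value at $0$ is fixed to be $0$ by the convention \eqref{eq2}, which already matches the condition $(\psi_i\varphi)(0)=0$ required for $\mathfrak{M}^0_0(U)$. Thus only continuity at $0$ remains to be checked.

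The hard step — which is in fact not hard — is the continuity at $0$. Here I would use two ingredients. First, since $\varphi\in \mathfrak{M}^m_0(U)$ and $m\geq 0$, the defining condition for $\alpha=0$ yields $\varphi(0)=0$; combined with continuity of $\varphi$, this gives $\varphi(x)\to 0$ as $x\to 0$. Second, since the partition functions $\varphi_i$ of Lemma \ref{theorem1} take values in $[0,1]$, the functions $\psi_i$ defined by \eqref{eq4} satisfy $0\leq \psi_i(x)\leq 1$ on $\mathbb{R}^n\setminus\{0\}$. Consequently, for every $x\in U\setminus\{0\}$,
\[
|(\psi_i\varphi)(x)| \;=\; \psi_i(x)\,|\varphi(x)| \;\leq\; |\varphi(x)|,
\]
and the right-hand side tends to $0$ as $x\to 0$. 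Therefore $(\psi_i\varphi)(x)\to 0 = (\psi_i\varphi)(0)$, establishing continuity at $0$.

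Putting the two pieces together, $\psi_i\varphi\in \mathcal{C}^0_X(U)$ with $(\psi_i\varphi)(0)=0$, which is exactly the statement that $\psi_i\varphi\in \mathfrak{M}^0_0(U)$. I do not anticipate any genuine obstacle; the content of the lemma is simply that multiplication by the bounded (but singular at $0$) cutoff $\psi_i$ destroys the higher-order vanishing of $\varphi$ at $0$ but preserves the order-zero vanishing, and this is the best one can hope for since $\psi_i$ itself is typically discontinuous at $0$.
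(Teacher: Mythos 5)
Your proof is correct and is essentially the argument the paper intends: the lemma is stated without an explicit proof, following immediately from the preceding observations that $\psi_i$ is bounded (indeed $0\leq\psi_i\leq 1$), that $(\psi_i\varphi)(0)=0$ by the convention \eqref{eq2}, and that $\varphi(0)=0$ for $\varphi\in\mathfrak{M}^m_0(U)$. Your estimate $|(\psi_i\varphi)(x)|\leq|\varphi(x)|\to 0$ supplies exactly the continuity-at-the-origin check that the paper leaves implicit.
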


Furthermore,

\begin{lemma}
\label{lemma2}
For any open neighborhood $U$ of $0$ in $\mathbb{R}^n$ and nonnegative integer $m$, if $u\in \mathcal{H}om_{\mathbb{R}_{\mathbb{R}^n}}(\mathcal{C}^m_{\mathbb{R}^n}, \mathcal{C}^0_{\mathbb{R}^n})(U)$, then
$u({\mathfrak{M}^m_0})\subseteq {\mathfrak{M}^0_0}.$
\end{lemma}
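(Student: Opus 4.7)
My approach is to decompose $\varphi$ via the partition of unity \eqref{eq1} into pieces supported in proper closed cones with vertex $0$, and then exploit the support-preservation property inherent in any sheaf morphism together with continuity of the output. If $0\notin V$ the inclusion is automatic since both of the subsheaves in Definition \ref{definition1} reduce to the full sections there, so we may assume $0\in V$ and take $\varphi\in\mathfrak{M}^m_0(V)$; the task becomes showing $u_V(\varphi)(0)=0$. Choose the cover $\{U_i\}_{i\in I}$ of $S^{n-1}$ in Lemma \ref{theorem1} so that $|I|\geq 2$ and every $U_i\subsetneq S^{n-1}$. Then each closed cone $K_i:=\{tx : t\geq 0,\, x\in\overline{U_i}\}$ is a proper closed subset of $\mathbb{R}^n$, and consequently $V\setminus K_i$ is a nonempty open set with $0$ in its closure.

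Taylor's theorem gives $\partial^\beta\varphi(x)=o(|x|^{m-|\beta|})$ for $|\beta|\leq m$, while direct differentiation of \eqref{eq4} yields $|\partial^\gamma\psi_i(x)|=O(|x|^{-|\gamma|})$; the Leibniz rule combines these into $\partial^\alpha(\psi_i\varphi)(x)=o(|x|^{m-|\alpha|})$ for $|\alpha|\leq m$, so that (extended by $0$ at the origin) $\psi_i\varphi$ actually lies in $\mathcal{C}^m_X(V)$. This is a strengthening of Lemma \ref{lemma1}, and is essential here because $u_V$ can only be applied to sections of $\mathcal{C}^m_X$. Applying $u_V$ term-by-term to \eqref{eq1} and using $\mathbb{R}$-linearity then yields
\[
u_V(\varphi)\;=\;\sum_{i\in I} u_V(\psi_i\varphi).
\]

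Since $u$ is a sheaf morphism it commutes with restrictions, so $\psi_i\varphi|_{V\setminus K_i}=0$ implies $u_V(\psi_i\varphi)|_{V\setminus K_i}=0$. The continuous function $u_V(\psi_i\varphi)\in\mathcal{C}^0(V)$ therefore vanishes on $V\setminus K_i$, and because $0$ is a limit point of that set, continuity forces $u_V(\psi_i\varphi)(0)=0$; summing over the finite index set $I$ gives $u_V(\varphi)(0)=0$, as required. The principal obstacle is the regularity upgrade $\psi_i\varphi\in\mathcal{C}^m$ needed to feed each piece into $u_V$: although $\psi_i$ is singular at $0$, the Taylor vanishing of $\varphi$ to order $m$ precisely compensates for the blow-up of derivatives of $\psi_i$, and the standard fact that a function which is $C^k$ off a point and whose partials up to order $k$ extend continuously through it is itself $C^k$ there then places all partials continuously through the origin.
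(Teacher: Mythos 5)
Your proof is correct and follows essentially the same route as the paper: decompose $\varphi$ using the conic cutoffs $\psi_i$ built from a partition of unity on $S^{n-1}$, invoke locality of the sheaf morphism on the nonempty open cone where each piece $\psi_i\varphi$ vanishes, and conclude $u(\psi_i\varphi)(0)=0$ by continuity before summing. If anything, your version is the more careful one at the crucial point: you justify the $C^m$ regularity of $\psi_i\varphi$ via the estimates $\partial^\gamma\psi_i=O(\vert x\vert^{-\vert\gamma\vert})$ and $\partial^\beta\varphi=o(\vert x\vert^{m-\vert\beta\vert})$, which is genuinely needed before $u$ can be applied to the pieces, whereas the paper's Lemma \ref{lemma1} records only continuity and its assertion that $\vert\vert\partial^\alpha\psi_i\vert\vert$ is bounded near the origin is not literally correct (these derivatives are homogeneous of negative degree).
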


\begin{proof}
First, let us consider a sphere $S$ contained in $U$ and centered at the origin, and denote, for example, by $A$ and $B$ the north and south poles of $S$. Next, consider the following open covering of $S$: $\{U_1, U_2\}$, where $U_1$ contains $A$ and does not intersect some open neighborhood $V_1$ of $B$, and, similarly, $U_2$ contains $B$ and does not intersect some open neighborhood $V_2$ of $A$. By Lemma \ref{theorem1}, we let $\{\varphi_1, \varphi_2\}$ be a partition of unity subordinate to the covering $\{U_1, U_2\}$, and let $\psi_1, \psi_2$ be functions derived from the $\varphi_i$ as in (\ref{eq4}). We denote by $\mathbb{R}^+V_i$ the open cone generated by $V_i$, $i=1, 2$. It is obvious that $\psi_i$ vanishes on $\mathbb{R}^+V_i$. Now, let $\varphi\in \mathfrak{M}^m_0(U)$; it is easily seen that $\partial^\alpha(\psi_i\varphi)|_{\mathbb{R}^+V_i}=0$, $i=1, 2$, for any $\vert\alpha\vert\leq m$. Since $\vert\vert\partial^\alpha\psi_i\vert\vert$, $i=1, 2$, is bounded, it follows that
\begin{equation}
\label{eq11}
\partial^\alpha({\psi_i}\varphi)|_{\overline{\mathbb{R}^+V_i}}=0.
\end{equation}
Furthermore, since $u$ is linear, Equation (\ref{eq11}) implies that \[
u({\psi_i}\varphi)|_{\overline{\mathbb{R}^+V_i}}=0;
\]
thus
\[
u(\psi_i\varphi)(0)=0,
\]
for every $i=1, 2$. On the other hand, by Lemma \ref{lemma1}, $\psi_i\varphi\in \mathfrak{M}^0_0(U)$, for every $i$; but
\[
\varphi= \sum_{i=1}^2\psi_i\varphi,
\]
therefore
\[
u(\varphi)(0)= u(\psi_1\varphi)(0)+ u(\psi_2\varphi)(0)=0.
\]
Thus,
\[
u(\varphi)\in \mathfrak{M}^0_0(U),
\]
and the proof is finished.
\end{proof}


%

We are now set for the proof of a \textit{particular case of Theorem \ref{theorem3}}: the \textit{isomorphism} $\mathcal{D}_X^{m, 0}\simeq \mathcal{H}om_{\mathbb{R}_X}(\mathcal{C}^m_X, \mathcal{C}^0_X)$. But first let us recall Taylor's theorem for multivariate functions: Let $f: \mathbb{R}^n\longrightarrow \mathbb R$ be $C^k$ at a point $a\in \mathbb{R}^n$. Then there exists $h_\alpha: \mathbb{R}^n\longrightarrow \mathbb R$ such that
\begin{equation}
\label{eq13}
f(x)= \sum_{\vert\alpha\vert\leq k}\frac{\partial^\alpha f(a)}{\alpha!}(x-a)^\alpha + \sum_{\vert\alpha\vert\leq k}h_\alpha(x)(x- a)^\alpha,
\end{equation}

and $\lim_{x\rightarrow a}h_\alpha(x)=0.$ Moreover, if $f$ is $C^{k+1}$, then

\begin{equation}
\label{eq14}
f(x)= \sum_{\vert\alpha\vert\leq k}\frac{\partial^\alpha f(a)}{\alpha!}(x- a)^\alpha+ \sum_{\vert \beta\vert= k+1}R_\beta(x)(x-a)^\beta,
\end{equation}
where
\begin{equation}
\label{eq15}
R_\beta(x)= \frac{\vert\beta\vert}{\beta!}\int^1_0(1-t)^{\vert\beta\vert-1}\partial^\beta f(a+ t(x-a))dt.
\end{equation}
%
%

We have used in both equations (\ref{eq14}) and (\ref{eq15}) the multi-index notation:
\[
x^\alpha= x_1^{\alpha_1}\cdots x_n^{\alpha_n},
\]
where $x= (x_1, \ldots, x_n)$ and $\alpha= (\alpha_1, \ldots, \alpha_n)$.

\begin{lemma}
\label{lemma5}
Let $X$ be an $n$-dimensional real $C^\infty$ manifold. For any local chart $U\subseteq X$, let $u\in \mathcal{H}om_{\mathbb{R}_X}(\mathcal{C}^m_X, \mathcal{C}^0_X)(U)$, $\mathcal{P}^m$ the space of polynomials on $U$ of degree $\leq m$  and $\mathcal{P}^m_U$ the constant sheaf on $U$ with stalk $\mathcal{P}^m$. If $u(\mathcal{P}^m_U)=0,$ then $u=0$.
\end{lemma}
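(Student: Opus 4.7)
The plan is to reduce, for any $\varphi \in \mathcal{C}^m_X(U)$ and any point $x_0\in U$, the evaluation $u(\varphi)(x_0)$ to a combination of (a) a value of $u$ on a polynomial (killed by hypothesis) and (b) a value of $u$ on an element of $\mathfrak{M}^m_{x_0}(U)$ (killed, at $x_0$, by Lemma \ref{lemma2}). Once this is done for every $x_0\in U$, we conclude that $u(\varphi)$ is the zero section in $\mathcal{C}^0_X(U)$, so $u=0$.

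More concretely, fix $x_0\in U$ and work in the chart coordinates. By Taylor's theorem (\ref{eq13}) applied to $\varphi$ at $a=x_0$ with $k=m$, one writes
\[
\varphi(x) = T_{x_0}(x) + r_{x_0}(x), \qquad T_{x_0}(x) := \sum_{|\alpha|\le m}\frac{\partial^\alpha \varphi(x_0)}{\alpha!}(x-x_0)^\alpha.
\]
Here $T_{x_0}\in \mathcal{P}^m_U(U)$, and $r_{x_0} = \varphi - T_{x_0}$ lies in $\mathcal{C}^m_X(U)$. Since differentiation commutes with subtraction and $\partial^\beta T_{x_0}(x_0)=\partial^\beta \varphi(x_0)$ for every $|\beta|\le m$, one gets $\partial^\beta r_{x_0}(x_0)=0$ for all $|\beta|\le m$; thus $r_{x_0}\in \mathfrak{M}^m_{x_0}(U)$.

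Now apply $u$. By the hypothesis $u(\mathcal{P}^m_U)=0$ we have $u(T_{x_0})=0$. On the other hand, Lemma \ref{lemma2} (stated at the origin, but transported to $x_0$ by the translation which is an automorphism of the chart; this does not affect the construction of the cutoff functions $\psi_i$, which may just as well be centered at $x_0$) gives $u(\mathfrak{M}^m_{x_0})\subseteq \mathfrak{M}^0_{x_0}$, hence $u(r_{x_0})(x_0)=0$. Linearity of $u$ then yields
\[
u(\varphi)(x_0) = u(T_{x_0})(x_0)+u(r_{x_0})(x_0) = 0.
\]
Since $x_0\in U$ was arbitrary, $u(\varphi)=0$, and since $\varphi$ was arbitrary, $u=0$.

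The argument is essentially bookkeeping once Lemma \ref{lemma2} is in hand; the only mildly delicate point is the verification that the Taylor remainder $r_{x_0}$ indeed belongs to $\mathfrak{M}^m_{x_0}(U)$ rather than merely satisfying the pointwise ``little-oh'' estimate appearing explicitly in (\ref{eq13}). This is what lets us feed $r_{x_0}$ into Lemma \ref{lemma2}, and it is where the whole proof strategy hinges.
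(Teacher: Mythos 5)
Your proof is correct and follows the paper's own strategy: at each $x_0$ split $\varphi$ into a Taylor polynomial (killed by the hypothesis $u(\mathcal{P}^m_U)=0$) plus a remainder in $\mathfrak{M}^m_{x_0}$ (killed at $x_0$ by Lemma \ref{lemma2}), then use linearity. The one difference is that you take the full order-$m$ Taylor polynomial, whereas the paper truncates at order $m-1$ and asserts that the resulting remainder $\psi=\sum_{|\beta|=m}R_\beta\,h^\beta$ lies in $\mathfrak{M}^m_{x_0}$ --- which is not quite accurate, since the order-$m$ derivatives of that remainder at $x_0$ equal $\partial^\beta\varphi(x_0)$ and need not vanish; your choice of $T_{x_0}$ is the one that actually makes the decomposition land in $\mathcal{P}^m_U(U)\oplus$ (something in $\mathfrak{M}^m_{x_0}(U)$) as required.
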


\begin{proof}
Let $\varphi\in \mathcal{C}^m_X(V)$, where $V$ is a subopen of $U$ containing a point $x_0$. By applying mutatis mutandis Taylor's formula (cf. (\ref{eq14})) on $\varphi$, about $x_0$, we have that
\[
\varphi(x_0+h)= q(x_0+h)+ \psi(x_0+ h) ,
\]
where
\[
q(x_0+h)= \sum_{\vert\alpha\vert\leq m-1}\frac{\partial^\alpha\varphi(x_0)}{\alpha!}h^\alpha,
\]
and
\[
\psi(x_0+h)= \sum_{\vert\beta\vert=m}R_\beta(x_0+h)h^\beta;
\]
so $q\in \mathcal{P}_U^m(V)$ and $\psi\in \mathfrak{M}^m_{x_0}(V)$. Then, by virtue of the hypothesis and Lemma \ref{lemma2}, $u(\varphi)\in \mathfrak{M}^0_{x_0}(V)$; therefore $u(\varphi)(x_0)=0$. But this holds for all $x_0\in V$, subopen $V$ of $U$, and $\varphi\in \mathcal{C}^m_X(V)$, so $u=0$.
\end{proof}

\subsection{Case $0\leq r< m$}

\begin{theorem}
\label{theorem5}
Let $X$ be an $n$-dimensional $C^\infty$ real manifold and $\mathcal{D}^{m, r}_X$ the sheaf of differential operators of order $\leq m$ and whose coefficients are of class $C^r$. Then, the natural morphism
\begin{equation}
\label{eq5}
\mathcal{D}^{m, r}_X\longrightarrow \mathcal{H}om_{\mathbb{R}_X}(\mathcal{C}^m_X, \mathcal{C}^r_X)
\end{equation} is an isomorphism.
\end{theorem}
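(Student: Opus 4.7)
The claim is local, so I fix a chart $U \subseteq X$ identified with an open subset of $\mathbb{R}^n$. The plan is, for any $u \in \mathcal{H}om_{\mathbb{R}_X}(\mathcal{C}^m_X, \mathcal{C}^r_X)(U)$, first to produce a representing differential operator $P = \sum_{|\alpha| \leq m} a_\alpha \partial^\alpha$ with merely continuous coefficients by combining Taylor's formula with Lemma \ref{lemma2}; and then to bootstrap the regularity of each $a_\alpha$ from $\mathcal{C}^0$ to $\mathcal{C}^r$ by testing $u$ on coordinate monomials. Injectivity is a short independent check.

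\textbf{Injectivity.} Suppose $P = \sum_{|\alpha| \leq m} a_\alpha \partial^\alpha$ with $a_\alpha \in \mathcal{C}^r_X(V)$ annihilates every $\varphi \in \mathcal{C}^m_X(V)$. Evaluating $P$ on $\varphi(y) = y^\beta$, $|\beta| \leq m$, and inducting on $|\beta|$, the identity $P(y^\beta) = \beta!\, a_\beta$ (after the previously handled lower-order coefficients have been shown to vanish by the same device) forces $a_\beta \equiv 0$. Hence $P = 0$.

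\textbf{Surjectivity modulo regularity.} Given $u$, compose with $\mathcal{C}^r_X \hookrightarrow \mathcal{C}^0_X$ and define, for $x_0 \in U$ and $|\alpha| \leq m$, $a_\alpha(x_0) := u\bigl(\tfrac{(y-x_0)^\alpha}{\alpha!}\bigr)(x_0)$. For any $\varphi \in \mathcal{C}^m_X(V)$ with $x_0 \in V$, Taylor's formula (\ref{eq14}) decomposes $\varphi = q + \psi$ near $x_0$, where $q$ is a polynomial of degree $\leq m$ and $\psi \in \mathfrak{M}^m_{x_0}(V)$. Lemma \ref{lemma2} (applied to $u$ viewed into $\mathcal{C}^0_X$) gives $u(\psi)(x_0) = 0$, and linearity of $u$ collapses the representation to $u(\varphi)(x_0) = \sum_{|\alpha|\leq m} a_\alpha(x_0)\,\partial^\alpha\varphi(x_0)$. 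Since $x_0$ and $\varphi$ were arbitrary, $u = \sum_{|\alpha| \leq m} a_\alpha \partial^\alpha$ as a morphism of sheaves on $U$.

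\textbf{Regularity bootstrap --- the main step.} It remains to show each $a_\alpha \in \mathcal{C}^r_X(U)$. Specializing $u = \sum a_\alpha \partial^\alpha$ to $\varphi(y) = y^\beta$ with $|\beta| \leq m$ yields
\[
u(y^\beta)(x) \;=\; \sum_{\alpha \leq \beta,\,|\alpha|\leq m} a_\alpha(x)\,\frac{\beta!}{(\beta-\alpha)!}\,x^{\beta-\alpha}.
\]
The left-hand side lies in $\mathcal{C}^r_X(U)$ because $y^\beta$ is smooth in the chart and $u$ has codomain $\mathcal{C}^r_X$. Induction on $|\beta|$ and inversion of the triangular system expresses $\beta!\,a_\beta(x)$ as $u(y^\beta)(x)$ minus a polynomial combination of the already-promoted $a_\gamma$, $|\gamma| < |\beta|$, whence $a_\beta \in \mathcal{C}^r_X(U)$. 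This regularity promotion is the one place where the finer codomain $\mathcal{C}^r_X$ (as opposed to $\mathcal{C}^0_X$) really enters, and it is what I expect to be the delicate point of the proof; once it is in hand, the construction is inverse to the natural morphism (\ref{eq5}), which, combined with the injectivity above, yields the isomorphism.
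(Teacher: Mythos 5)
Your proposal is correct and follows essentially the same route as the paper: the paper packages your Taylor-plus-Lemma~\ref{lemma2} step as Lemma~\ref{lemma5} (a morphism annihilating $\mathcal{P}^m_U$ is zero) and builds $P=\sum_{|\beta|\leq m} a_\beta\partial^\beta$ by the same triangular induction on monomials, reading off the $C^r$ regularity of the coefficients at construction time rather than bootstrapping it afterwards. Your normalization $a_\alpha(x_0)=u\bigl((y-x_0)^\alpha/\alpha!\bigr)(x_0)$ even repairs a harmless missing factor of $\alpha!$ in the paper's recursion $a_\alpha=(u-\sum_{|\beta|<t}a_\beta\partial^\beta)(x^\alpha)$.
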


\begin{proof}
That the morphism (\ref{eq5}) is injective is obvious. Let us show surjectiveness. To this end, let $u\in \mathcal{H}om_{\mathbb{R}_X}(\mathcal{C}^m_X, \mathcal{C}^r_X)(U)$, where $U$ is a local chart $(U; x_1, \ldots, x_n)$ in $X$. We will show that $u$ is in fact a differential operator with coefficient functions of class $C^r$. For this purpose, consider the differential operator
\begin{equation}
\label{eq8}
P= \sum_{\vert\beta\vert\leq m}a_\beta(x)\partial^\beta
\end{equation}
in which the $a_\beta$ are of class $C^r$ and defined by induction on $\vert\beta\vert$, in the following manner: let $\mathbb{I}:U\longrightarrow \mathbb{R}$ be the constant function $\mathbb{I}(x)=1$, we set
\[
a_0(x)= u(\mathbb{I})\equiv a_0.
\]
Suppose that we have defined $a_\beta$ for $\vert\beta\vert< t$, then, for any multi-index $\alpha$ such that $\vert\alpha\vert= t$, set:
\[
a_\alpha(x)= (u- \sum_{\vert\beta\vert<t}a_\beta(x)\partial^\beta)(x^\alpha).
\]
Clearly, $a_\alpha\in \mathcal{C}^r_X(U)$. That $P\in \mathcal{H}om_{\mathbb{R}_X}(\mathcal{C}^m_X, \mathcal{C}^r_X)(U)$ is clear. Since, for every $x^\beta$, with $\vert\beta\vert\leq m$,
\[
(u-{P})(x^{\beta})=0,
\]
it follows that
\[
(u-{P})(\mathcal{P}^m_U)=0;
\]
hence, by Lemma \ref{lemma5}, $u=P$, and the proof is finished.
\end{proof}

\subsection{Case $m-r<0$}

\begin{lemma}
For any nonnegative integer $m$, $\mathcal{H}om_{\mathbb{R}_X}(\mathcal{C}^m_X, \mathcal{C}^{m+1}_X)=0.$
\end{lemma}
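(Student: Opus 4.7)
The plan is to reduce to Theorem \ref{theorem5} and then sharpen by exploiting the extra regularity of the target. Given $u \in \mathcal{H}om_{\mathbb{R}_X}(\mathcal{C}^m_X, \mathcal{C}^{m+1}_X)(U)$ over a local chart $U$, I compose with the inclusion $\mathcal{C}^{m+1}_X \hookrightarrow \mathcal{C}^0_X$ and view $u$ as a section of $\mathcal{H}om_{\mathbb{R}_X}(\mathcal{C}^m_X, \mathcal{C}^0_X)(U)$. Theorem \ref{theorem5} (whose proof is unchanged in the degenerate case $m=0$, since Lemma \ref{lemma5} still applies there) then writes $u = \sum_{|\alpha|\le m} a_\alpha \partial^\alpha$ with $a_\alpha \in \mathcal{C}^0(U)$. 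The inductive formula $a_\alpha = (u - \sum_{|\beta|<|\alpha|} a_\beta \partial^\beta)(x^\alpha)$ from the proof of Theorem \ref{theorem5} expresses each $a_\alpha$ as a polynomial combination of values of $u$ on polynomials, all of which lie in $\mathcal{C}^{m+1}(U)$; by induction on $|\alpha|$, one thus obtains the sharpened statement $a_\alpha \in \mathcal{C}^{m+1}(U)$.

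The remaining task is to show that every $a_\alpha$ vanishes. To this end, I test $u$ against functions that lie in $\mathcal{C}^m$ but fail sharply to be in $\mathcal{C}^{m+1}$. Fix $x_0 \in U$, a unit vector $\xi$ in the chart coordinates, and $\theta \in (0,1)$, and take $\varphi_\xi(x) = \chi(x)\,|\langle x-x_0,\xi\rangle|^{m+\theta}$, where $\chi$ is a smooth cut-off equal to $1$ near $x_0$. Then $\varphi_\xi \in \mathcal{C}^m(U)$ and $\partial^\alpha \varphi_\xi = \xi^\alpha f^{(|\alpha|)}(\langle x-x_0,\xi\rangle)$ near $x_0$, with $f(s) = |s|^{m+\theta}$. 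Restricting $u(\varphi_\xi)$ to the segment $x = x_0 + h\xi$ and Taylor-expanding each $C^{m+1}$ coefficient $a_\alpha$ to order $m+1$ in $h$, I obtain an asymptotic expansion of $u(\varphi_\xi)(x_0+h\xi)$ consisting of an integer-power polynomial of degree $\le m+1$, a remainder $o(h^{m+1})$, and non-integer-power terms of the shape $c_\ell(x_0)\,|h|^{m+\theta-\ell}\operatorname{sgn}^\ell(h)$ with $\ell \in \{-m,\dots,0\}$.

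Because $u(\varphi_\xi)$ lies in $\mathcal{C}^{m+1}$, its Taylor expansion at $x_0$ along the $\xi$-line involves only integer powers of $h$ up to order $m+1$, so every $c_\ell(x_0)$ must vanish. The equation $c_{-m}(x_0)=0$ reads $\sum_{|\alpha|=m}a_\alpha(x_0)\xi^\alpha = 0$, and varying $\xi$ over $S^{n-1}$ forces $a_\alpha \equiv 0$ for all $|\alpha|=m$. Plugging this back, $c_{-m+1}(x_0)=0$ collapses to $\sum_{|\alpha|=m-1}a_\alpha(x_0)\xi^\alpha = 0$, and a downward induction on $\ell$ kills every $a_\alpha$, giving $u=0$. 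The principal obstacle is the asymptotic bookkeeping: one must check that the sign parities $\operatorname{sgn}^{k+j}$ accompanying all contributions to a common exponent $m+\theta-(k-j)$ collapse to a single $\operatorname{sgn}^\ell$ (so that each $\ell$ produces one scalar vanishing condition rather than a sign-dependent pair), and one has to use the full $\mathcal{C}^{m+1}$-regularity of the $a_\alpha$ so that the Taylor expansion reaches exactly the order at which the non-integer exponents $m+\theta-\ell \in (0,m+1)$ would otherwise be camouflaged by the polynomial part.
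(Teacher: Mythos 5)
Your proof is correct, and while its first half coincides with the paper's (compose with $\mathcal{C}^{m+1}_X\hookrightarrow\mathcal{C}^0_X$ and invoke Theorem \ref{theorem5} to write $u=\sum_{|\alpha|\le m}a_\alpha\partial^\alpha$ with continuous coefficients), the way you then force the coefficients to vanish is genuinely different and, frankly, more solid. The paper picks a single $b\in\mathcal{C}^m_X(U)$ with $\partial^{m+1}b$ failing to exist at a point, declares ``without loss of generality'' that all $a_\alpha$ with $|\alpha|<m$ vanish, and then differentiates the products $a_\alpha\partial^\alpha b$ --- even though at that stage the $a_\alpha$ are only known to be continuous, so $\partial^\beta(a_\alpha\partial^\alpha b)$ is not obviously meaningful and the reduction to top order is not justified. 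You avoid both problems: the bootstrap $a_\alpha\in\mathcal{C}^{m+1}(U)$ via the inductive formula $a_\alpha=(u-\sum_{|\beta|<|\alpha|}a_\beta\partial^\beta)(x^\alpha)$ is a cheap but genuinely useful observation the paper never makes, and the H\"older test functions $\chi(x)\,|\langle x-x_0,\xi\rangle|^{m+\theta}$ separate the contributions of the different orders $|\alpha|$ by their distinct non-integer exponents $\theta,1+\theta,\dots,m+\theta$, so a downward induction on $|\alpha|$ replaces the paper's ``WLOG''. Your two flagged checkpoints --- that the sign parities $\operatorname{sgn}^{k+j}$ collapse to a single parity on each fixed exponent class (true, since $k+j$ and $k-j$ have the same parity), and that exactly the $\mathcal{C}^{m+1}$-regularity of the $a_\alpha$ is needed to push the Taylor remainder below $o(h^{m+1})$ --- are the right points to worry about, and both go through. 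The only blemish is an indexing slip: with exponent written as $m+\theta-\ell$ the obstructing values are $\ell\in\{0,\dots,m\}$ (the most singular term, of exponent $\theta$, coming from $k=m$, $j=0$), not $\ell\in\{-m,\dots,0\}$; this is purely notational and does not affect the argument. The price of your route is length; what it buys is a complete proof, including the case $m=0$ and the vanishing of the zeroth-order coefficient $a_0$, which the paper's contradiction argument does not cleanly deliver.
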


\begin{proof}
Since $\mathcal{C}^{m+1}_X\subseteq \mathcal{C}^0_X$, we have that
\[
\mathcal{H}om_{\mathbb{R}_X}(\mathcal{C}^m_X, \mathcal{C}^{m+1}_X)\subseteq \mathcal{H}om_{\mathbb{R}_X}(\mathcal{C}^m_X, \mathcal{C}^0_X).
\]
By Theorem \ref{theorem5}, for any open set $U$ in $X$, if
\[
u\in \mathcal{H}om_{\mathbb{R}_X}(\mathcal{C}^m_X, \mathcal{C}^{m+1}_X)(U)\subseteq \mathcal{H}om_{\mathbb{R}_X}(\mathcal{C}^m_X, \mathcal{C}^0_X)(U),
\]
then
\[
u\in \mathcal{D}^{m, 0}_X(U).
\]
Without loss of generality, suppose that $U$ is a local chart of $X$, so
\begin{equation}
\label{eq10}
u= \sum_{\vert\alpha\vert\leq m}a_\alpha(x)\partial^\alpha,
\end{equation}
where $a_\alpha(x)$ is $C^0$. If all the $a_\alpha(x)$, in (\ref{eq10}), are zero , then there is nothing to prove. Suppose there exists an $\alpha$ such that $a_\alpha(x_0)\neq 0$ for some point $x_0\in U$, and let $b\in \mathcal{C}^m_X(U)$ be such that $\partial^{m+1}b$ does not exist at $x_0$; consequently, $b\notin \mathcal{C}^{m+1}_X(U)$. By applying (\ref{eq10}) on $b$, we notice that $u(b)\notin \mathcal{C}^{m+1}_X(U)$. For, suppose, without loss of generality, that, in (\ref{eq10}), $a_\alpha(x)=0$ for all $\alpha$ such that $\vert\alpha\vert<m$; so
\[
u(b)= \sum_{\vert\alpha\vert=m}a_\alpha(x)\partial^\alpha b.
\]
Since $\partial^{m+1}b$ does not exist at $x_0$, then, for some $\beta$ with $\vert\beta\vert=1$,
\[
\partial^\beta(u(b))= \sum_{\vert\alpha\vert=m}\partial^\beta(a_\alpha(x)\partial^\alpha b);
\]
thus $\partial^\beta(a_\alpha(x)\partial^\alpha b)$ does not exist at $x_0$. But this leads to a contradiction as by hypothesis $u(b)\in \mathcal{C}^{m+1}_X(U).$
\end{proof}

Thus, whenever $m< r$, the following theorem holds.

\begin{theorem}
Let $m$, $r\in \mathbb Z$ such that $0\leq m<\infty$, $0\leq r\leq \infty$ and $m-r<0$. Then,
\begin{equation}
\label{eq16}
\mathcal{H}om_{\mathbb{R}_X}(\mathcal{C}^m_X, \mathcal{C}^r_X)=0.
\end{equation}
Hence,
\begin{equation}
\label{eq17}
\mathcal{D}^{m-r, r}_X\simeq \mathcal{H}om_{\mathbb{R}_X}(\mathcal{C}^m_X, \mathcal{C}^r_X).
\end{equation}
\end{theorem}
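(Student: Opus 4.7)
The theorem is essentially an immediate corollary of the preceding lemma, and my plan is to reduce to it by exploiting the monotonicity of the sheaves $\mathcal{C}^k_X$ in $k$. The hypothesis $m-r<0$ means $r \geq m+1$ (with $r=\infty$ allowed), and consequently there is an inclusion of sheaves $\mathcal{C}^r_X \hookrightarrow \mathcal{C}^{m+1}_X$, since a function of class $C^r$ is in particular of class $C^{m+1}$. (For $r=\infty$ this is the definition $\mathcal{C}^\infty_X = \bigcap_k \mathcal{C}^k_X$.)

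Next I would push this inclusion through the functor $\mathcal{H}om_{\mathbb{R}_X}(\mathcal{C}^m_X,-)$. A local section $u \in \mathcal{H}om_{\mathbb{R}_X}(\mathcal{C}^m_X, \mathcal{C}^r_X)(U)$ is a morphism of $\mathbb{R}_X$-sheaves whose image lands in $\mathcal{C}^r_X$ and hence, a fortiori, in $\mathcal{C}^{m+1}_X$; therefore $u$ is canonically an element of $\mathcal{H}om_{\mathbb{R}_X}(\mathcal{C}^m_X, \mathcal{C}^{m+1}_X)(U)$. This yields the inclusion of sheaves
\begin{equation*}
\mathcal{H}om_{\mathbb{R}_X}(\mathcal{C}^m_X, \mathcal{C}^r_X) \hookrightarrow \mathcal{H}om_{\mathbb{R}_X}(\mathcal{C}^m_X, \mathcal{C}^{m+1}_X),
\end{equation*}
and by the preceding lemma the target is zero, forcing the source to vanish. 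This establishes (\ref{eq16}).

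For the isomorphism (\ref{eq17}), I would simply invoke the convention set forth in \S 1 that $\mathcal{D}^{m-r,r}_X = 0$ whenever $m-r<0$ (including the case $r=\infty$, where $m-r=-\infty$). Combined with (\ref{eq16}), the natural morphism (\ref{eq7}) is then a map between two zero sheaves, hence trivially an isomorphism.

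I do not expect any real obstacle here: the structural content is entirely carried by the preceding lemma (and through it by Theorem \ref{theorem5}). The only point to be careful about is the case $r=\infty$, but it is handled uniformly once one notes that $\mathcal{C}^\infty_X \subseteq \mathcal{C}^{m+1}_X$ still holds and that the paper's convention already extends the vanishing of $\mathcal{D}^{m-r,r}_X$ to $m-r=-\infty$.
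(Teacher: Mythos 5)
Your proposal is correct and follows essentially the same route as the paper: the inclusion $\mathcal{C}^r_X\subseteq \mathcal{C}^{m+1}_X$ induces $\mathcal{H}om_{\mathbb{R}_X}(\mathcal{C}^m_X, \mathcal{C}^r_X)\subseteq \mathcal{H}om_{\mathbb{R}_X}(\mathcal{C}^m_X, \mathcal{C}^{m+1}_X)=0$ by the preceding lemma, and $(\ref{eq17})$ then follows from the convention $\mathcal{D}^{m-r,r}_X=0$. You are merely a bit more explicit than the paper about the functoriality step and the $r=\infty$ case, which is fine.
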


\begin{proof}
(\ref{eq16}) follows from the sequence
\[
\mathcal{H}om_{\mathbb{R}_X}(\mathcal{C}^m_X, \mathcal{C}^r_X)\subseteq \mathcal{H}om_{\mathbb{R}_X}(\mathcal{C}^m_X, \mathcal{C}^{m+1}_X)=0.
\]
Hence, (\ref{eq17}) holds.
\end{proof}

\subsection{Case $m=r$}

\begin{lemma}
\label{lemma8}
Let $U$ be a local chart in a manifold $X$, $m$ a nonnegative integer and $P: \mathcal{C}^m_X(U)\longrightarrow \mathcal{C}^m_X(U)$ a morphism, given by
\[
P= \sum_{\vert\alpha\vert\leq m}a_\alpha(x)\partial^\alpha_X,
\]
where $a_\alpha(x)\in \mathcal{C}^0_X$. Then, $a_0(x)\in \mathcal{C}_X^m(U)$ and $a_\alpha(x)=0$ for all $\alpha$ such that $\vert\alpha\vert\neq 0$. In other words,
\[
\mathcal{H}om_{\mathbb{R}_X}(\mathcal{C}^m_X, \mathcal{C}^m_X)\subseteq \mathcal{C}^m_X.
\]
\end{lemma}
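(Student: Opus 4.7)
The operator $P$ is already presented as $\sum_{|\alpha|\leq m} a_\alpha(x)\,\partial^\alpha_X$ with $a_\alpha \in \mathcal{C}^0_X(U)$. My plan has two moves: first, bootstrap the regularity of each coefficient from $\mathcal{C}^0$ to $\mathcal{C}^m$ by probing $P$ on polynomials; second, show that such a $\mathcal{C}^m$-coefficient with $|\alpha|\geq 1$ must vanish, by probing $P$ on a carefully chosen test function in $\mathcal{C}^m_X \setminus \mathcal{C}^{m+1}_X$.

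For the regularity I induct on $|\alpha|$. The base case $a_0 = P(\mathbb{I}) \in \mathcal{C}^m_X(U)$ is immediate. For $|\alpha| = t \geq 1$, using $\partial^\beta(x^\alpha) = \frac{\alpha!}{(\alpha-\beta)!}\,x^{\alpha-\beta}$ whenever $\beta \leq \alpha$ componentwise (and vanishing otherwise), the identity
\[
\alpha!\, a_\alpha(x) \;=\; P(x^\alpha)(x) \;-\; \sum_{\beta<\alpha} a_\beta(x)\, \frac{\alpha!}{(\alpha-\beta)!}\, x^{\alpha-\beta}
\]
(where $\beta<\alpha$ means $\beta\leq\alpha$ and $\beta\neq\alpha$, forcing $|\beta|<t$) writes $\alpha!\, a_\alpha$ as a sum of a priori $\mathcal{C}^m$ quantities: $P(x^\alpha) \in \mathcal{C}^m_X(U)$ by hypothesis, and each $a_\beta$ is $\mathcal{C}^m$ by induction.

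For the vanishing, suppose for contradiction that some $a_\alpha$ with $|\alpha|\geq 1$ is not identically zero; let $t = \max\{|\alpha| : a_\alpha \not\equiv 0\}\geq 1$, pick $x_0 \in U$ where some $a_{\alpha_0}$ of order $|\alpha_0|=t$ is nonzero, and pick $\xi \in \mathbb{R}^n$ with $c_t(x_0,\xi) := \sum_{|\alpha|=t} a_\alpha(x_0)\,\xi^\alpha \neq 0$ (possible since this is a nonzero polynomial in $\xi$). With a $C^\infty$ cutoff $\eta$ equal to $1$ near $x_0$ and supported in $U$, the test function
\[
b(x) \;=\; \eta(x)\,\bigl((\xi \cdot (x-x_0))^+\bigr)^{m+1}, \qquad s^+ := \max(s,0),
\]
lies in $\mathcal{C}^m_X(U)$ but not in $\mathcal{C}^{m+1}_X(U)$. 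On the neighborhood where $\eta \equiv 1$, the chain rule $\partial^\alpha b = \xi^\alpha\, f^{(|\alpha|)}(\xi \cdot (x-x_0))$ with $f(s) = (s^+)^{m+1}$ yields
\[
P(b)(x) \;=\; \sum_{k=0}^{t} \frac{(m+1)!}{(m+1-k)!}\,\bigl((\xi\cdot(x-x_0))^+\bigr)^{m+1-k}\, c_k(x), \quad c_k(x) := \sum_{|\alpha|=k} a_\alpha(x)\,\xi^\alpha.
\]
Each $k$-th summand is $\mathcal{C}^{m-k}$ across the hyperplane $\xi\cdot(x-x_0)=0$, so a Leibniz expansion of the $(m{+}1{-}t)$-th iterated directional derivative in direction $\xi$ at $s=0^+$ isolates the single surviving contribution ($k=t$, Leibniz index $j=0$), producing a jump of size $(m+1)!\, c_t(x_0,\xi)\neq 0$. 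Hence $P(b)$ fails to be $\mathcal{C}^{m+1-t}$ near $x_0$, and since $t \geq 1$ forces $m+1-t \leq m$, this contradicts $P(b) \in \mathcal{C}^m_X(U)$.

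The main obstacle is verifying that the lower-order contributions ($k<t$) cannot cancel the top-order jump. The regularity count above settles it: for $k<t$ the $k$-th summand lies in $\mathcal{C}^{m-k} \supseteq \mathcal{C}^{m+1-t}$, so its $(m{+}1{-}t)$-th $\xi$-directional derivative is continuous across the singular hyperplane and contributes nothing to the jump, leaving $(m+1)!\, c_t(x_0,\xi)$ as the sole surviving discontinuity and completing the contradiction.
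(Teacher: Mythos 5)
Your proof is correct, and it takes a genuinely different route from the paper's. The paper extracts $a_0=P(\mathbb{I})$ and then eliminates the remaining coefficients block by block: it applies $P$ to antiderivatives of $x_i^m$ and of $x_i^mx_{i+1}^m$ and invokes the linear independence of the resulting families of derivatives, leaving general multi-indices to an ``analogous'' argument. You do two things differently. First, you add a regularity bootstrap absent from the paper: the identity $\alpha!\,a_\alpha=P(x^\alpha)-\sum_{\beta<\alpha}a_\beta\frac{\alpha!}{(\alpha-\beta)!}x^{\alpha-\beta}$, with induction on $\vert\alpha\vert$, upgrades every coefficient from $\mathcal{C}^0$ to $\mathcal{C}^m$; this is precisely what licenses differentiating $c_t$ in your Leibniz expansion. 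Second, you kill all coefficients of order $\geq 1$ at once with the single ridge function $b=\eta\cdot((\xi\cdot(x-x_0))^+)^{m+1}$, choosing $\xi$ so that the top-order symbol $c_t(x_0,\xi)=\sum_{\vert\alpha\vert=t}a_\alpha(x_0)\xi^\alpha$ is nonzero, and exhibiting a nonzero jump of the $(m+1-t)$-th $\xi$-directional derivative of $P(b)$ across the hyperplane $\xi\cdot(x-x_0)=0$. Your route is uniform in the multi-index and makes the loss-of-regularity mechanism quantitative; the paper's route is more elementary (only monomials in one or two coordinates at a time) but rests on a linear-independence claim that, by itself, does not force continuous coefficient functions to vanish, whereas your jump computation does. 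The regularity bookkeeping checks out: for $k<t$ the factor $((\xi\cdot(x-x_0))^+)^{m+1-k}$ is $\mathcal{C}^{m-k}\supseteq\mathcal{C}^{m+1-t}$ and $c_k\in\mathcal{C}^m$, so only the $k=t$, $j=0$ Leibniz term can (and does) produce the jump $(m+1)!\,c_t(x_0,\xi)\neq 0$, contradicting $P(b)\in\mathcal{C}^m_X(U)$ since $m+1-t\leq m$.
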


\begin{proof}
Let $\mathbb{I}:U\longrightarrow \mathbb{R}$ denote the constant function $\mathbb{I}(x)=1$. Then,
\[
P(\mathbb{I})= a_0(x)\equiv a_0
\]
and
\[
a_0\in \mathcal{C}^m_X(U).
\]
For any $\alpha= (\underbrace{0, \cdots, 0}_{i-1}, \alpha_i, 0, \cdots, 0)$, $1\leq i\leq n$, we contend that $a_\alpha=0$. In fact, let $\varphi\equiv \varphi(x_i)= x_i^m\in \mathcal{C}^{m-1}_X(U)$; if $\widetilde{\varphi}$ is an antiderivative of $\varphi$, then $\widetilde{\varphi}\in \mathcal{C}^m_X(U)$ and
\begin{equation}
\label{eq3}
\sum_{\alpha_i=1}^ma_{(0, \cdots, 0, \alpha_i, 0, \cdots, 0)}(x)\frac{\partial^{\alpha_i}\widetilde{\varphi}(x_i)}{\partial x_i^{\alpha_i}}= (P-a_0)(\widetilde{\varphi}(x_i))
\end{equation}
with
\[
(P-a_0)(\widetilde{\varphi}(x_i))\in \mathcal{C}^m_X(U).
\]
But the collection $\{\widetilde{\varphi}(x_i), \frac{\partial \widetilde{\varphi}(x_i)}{\partial x_i}, \ldots, \frac{\partial^m \widetilde{\varphi}(x_i)}{\partial x_i^m}\}$ is linearly independent, therefore, we have that
\[
P= a_0
\]
and
\[
a_{(0, \cdots, 0, \alpha_i, 0, \cdots, 0)}(x)=0
\]
for all $1\leq \alpha_i\leq m$.

Now, let $\alpha= (\underbrace{0, \cdots, 0}_{i-1}, \alpha_i, \alpha_{i+1}, 0, \cdots, 0)$ with $1\leq \alpha_i+ \alpha_{i+1}\leq m$. Consider the function $\psi\equiv \psi(x_i, x_{i+1})= x_i^mx_{i+1}^m\in \mathcal{C}^m_X(U)$ and let $\widetilde{\psi}\equiv \widetilde{\psi}(x_i, x_{i+1})$ be such that
\[
\frac{\partial^2\widetilde{\psi}(x_i, x_{i+1})}{\partial x_i\partial x_{i+1}}= \psi(x_i, x_{i+1}).
\]
Then,
\begin{equation}
\sum_{\alpha_{i+1}}^m\sum_{\alpha_i}^ma_{(0, \cdots, 0, \alpha_i, \alpha_{i+1}, 0, \cdots, 0)}(x)\frac{\partial^{\alpha_i+\alpha_{i+1}}\widetilde{\psi}(x_i, x_{i+1})}{\partial x_i^{\alpha_i}\partial x_{i+1}^{\alpha_{i+1}}}= (P- a_0)(\widetilde{\psi}(x_i, x_{i+1}))
\end{equation}
with
\[
(P- a_0)(\widetilde{\psi}(x_i, x_{i+1}))\in \mathcal{C}^m_X(U).
\]
Since the collection
\[
\{\widetilde{\psi}(x_i, x_{i+1}), \frac{\partial^{\alpha_i+\alpha_{i+1}}\widetilde{\psi}(x_i, x_{i+1})}{\partial x_i^{\alpha_i}\partial x_{i+1}^{\alpha_{i+1}}}\}_{1\leq \alpha_i+ \alpha_{i+1}\leq m}
\]
is linearly independent, it follows that
\[
a_{(0, \cdots, 0, \alpha_i, \alpha_{i+1}, 0, \cdots, 0)}(x)=0
\]
for all $\alpha_i$ and $\alpha_{i+1}$ such that $1\leq \alpha_i+ \alpha_{i+1}\leq m$.

In an analogous way, one shows that the rest of the coefficient functions $a_\alpha(x)$ outside of $a_0(x)$ are $0$. The proof is thus finished.
\end{proof}

That $\mathcal{C}^m_X\subseteq \mathcal{H}om_{\mathbb{R}_X}(\mathcal{C}^m_X, \mathcal{C}^m_X)$ is obvious. In fact, for any open $U\subseteq X$ and $\varphi\in \mathcal{C}^m_X(U)$, the map $\varphi: {\mathcal{C}^m_X}|_U\longrightarrow {\mathcal{C}^m_X}|_U$, defined by
\[
\varphi(\psi)= \varphi \psi
\]
is linear. Thus, on considering Lemma \ref{lemma8}, we have

\begin{theorem}
Let $m\in \mathbb{N}\cup \{0\}$. Then,
\[
\mathcal{H}om_{\mathbb{R}_X}(\mathcal{C}^m_X, \mathcal{C}^m_X)\simeq \mathcal{C}^m_X.
\]
\end{theorem}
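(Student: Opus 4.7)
The plan is to establish the isomorphism by exhibiting two mutually inverse sheaf morphisms. In one direction, as already observed in the paragraph preceding the statement, every section $\varphi\in\mathcal{C}^m_X(U)$ determines the multiplication endomorphism $\mu_\varphi:\psi\longmapsto\varphi\psi$, which is $\mathbb{R}_X$-linear and sends $\mathcal{C}^m_X|_U$ into itself; this gives an injection $\mathcal{C}^m_X\hookrightarrow\mathcal{H}om_{\mathbb{R}_X}(\mathcal{C}^m_X,\mathcal{C}^m_X)$. In the other direction, I would send $u\in\mathcal{H}om_{\mathbb{R}_X}(\mathcal{C}^m_X,\mathcal{C}^m_X)(U)$ to $u(\mathbb{I})\in\mathcal{C}^m_X(U)$, where $\mathbb{I}$ is the constant function $1$. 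To finish, I must check these two assignments are inverse to each other, which reduces entirely to the statement that every such $u$ acts by multiplication.

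The key step, then, is to identify $u$ with an operator of the form $\mu_{a_0}$. My plan is to first weaken the target: since $\mathcal{C}^m_X\subseteq\mathcal{C}^0_X$, we have the inclusion
\[
\mathcal{H}om_{\mathbb{R}_X}(\mathcal{C}^m_X,\mathcal{C}^m_X)\subseteq\mathcal{H}om_{\mathbb{R}_X}(\mathcal{C}^m_X,\mathcal{C}^0_X),
\]
and Theorem \ref{theorem5} (in the already-handled case $r=0$) tells us the right-hand side is $\mathcal{D}^{m,0}_X$. Working in a local chart $U$, I may therefore write $u=\sum_{|\alpha|\leq m}a_\alpha(x)\partial^\alpha$ with $a_\alpha\in\mathcal{C}^0_X(U)$. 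Now Lemma \ref{lemma8} applies directly: because $u$ in fact takes its values in $\mathcal{C}^m_X$, the lemma forces $a_\alpha\equiv 0$ for $|\alpha|\neq 0$ and $a_0=u(\mathbb{I})\in\mathcal{C}^m_X(U)$. Hence $u=\mu_{a_0}$ locally, and since the identification $u\mapsto u(\mathbb{I})$ commutes with restriction, it patches to a global sheaf morphism.

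Combining these two observations gives mutually inverse morphisms: starting from $\varphi\in\mathcal{C}^m_X(U)$, the composition yields $\mu_\varphi(\mathbb{I})=\varphi\cdot 1=\varphi$; starting from $u$, the composition yields $\mu_{u(\mathbb{I})}=u$ by the argument above. I do not anticipate any real obstacle in this step, since all the substantive analytic work (the partition-of-unity construction, the Taylor-polynomial reduction, and the linear-independence calculations) has already been carried out in Lemmas \ref{lemma2}, \ref{lemma5}, and \ref{lemma8} and in Theorem \ref{theorem5}. The only minor point to verify is the sheaf-theoretic functoriality — namely that the chart-independent formula $u\mapsto u(\mathbb{I})$ is manifestly natural in $U$ — which is immediate from the definition of $\mathbb{I}$ and the $\mathbb{R}_X$-linearity of $u$.
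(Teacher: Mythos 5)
Your proposal is correct and takes essentially the same route as the paper: embed $\mathcal{H}om_{\mathbb{R}_X}(\mathcal{C}^m_X,\mathcal{C}^m_X)$ into $\mathcal{H}om_{\mathbb{R}_X}(\mathcal{C}^m_X,\mathcal{C}^0_X)\simeq\mathcal{D}^{m,0}_X$ via Theorem \ref{theorem5}, use Lemma \ref{lemma8} to kill the coefficients $a_\alpha$ with $|\alpha|\neq 0$ and to place $a_0=u(\mathbb{I})$ in $\mathcal{C}^m_X$, and pair this with the multiplication map $\varphi\mapsto\mu_\varphi$ in the opposite direction. The only difference is presentational: you make the two mutually inverse morphisms and their compatibility with restriction explicit, where the paper simply combines the two inclusions.
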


Let us now state diagrammatically our results.

\begin{theorem}
Let $X$ be an $n$-dimensional $C^\infty$ real manifold and $\mathcal{D}^{m, r}_X$ the sheaf of differential operators of order $\leq m$ and whose coefficients are of class $C^r$. Then, the diagram
\[
\xymatrix{\mathcal{D}^{m-n, n}_X\ar[r]\ar@{^{(}->}[d] & \mathcal{H}om_{\mathbb{R}_X}(\mathcal{C}^m_X, \mathcal{C}^n_X)\ar@{^{(}->}[d]\\ \mathcal{D}^{m, k}_X\ar[r] & \mathcal{H}om_{\mathbb{R}_X}(\mathcal{C}^m_X, \mathcal{C}^k_X),}
\]
where horizontal arrows are isomorphisms, vertical arrows injections, and $k< m\leq n$, commutes.
\end{theorem}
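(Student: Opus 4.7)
The plan is to derive the theorem purely by assembling the three cases of Theorem~\ref{theorem3} already proved in the subsections above, together with the observation that both horizontal maps are instances of the same natural morphism~(\ref{eq7}) and that both vertical maps are the obvious forgetful inclusions. No new analytic input is required; the task is essentially bookkeeping of when each prior result applies.

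First I would handle the bottom horizontal arrow: the hypothesis $k<m$ puts us in the case $0\leq r<m$, so Theorem~\ref{theorem5} (with $r=k$) immediately supplies the isomorphism $\mathcal{D}^{m,k}_X\simeq \mathcal{H}om_{\mathbb{R}_X}(\mathcal{C}^m_X,\mathcal{C}^k_X)$. The top horizontal arrow then splits according to whether $m<n$ or $m=n$. In the subcase $m<n$ we have $m-n<0$, hence $\mathcal{D}^{m-n,n}_X=0$ by convention, while the theorem of the Case $m-r<0$ subsection (applied with $r=n$) gives $\mathcal{H}om_{\mathbb{R}_X}(\mathcal{C}^m_X,\mathcal{C}^n_X)=0$, so the natural morphism is trivially the zero isomorphism. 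In the subcase $m=n$, the top row reduces to $\mathcal{D}^{0,n}_X\simeq \mathcal{C}^n_X \longrightarrow \mathcal{H}om_{\mathbb{R}_X}(\mathcal{C}^m_X,\mathcal{C}^m_X)$, which is identified with the multiplication-action isomorphism $\varphi\mapsto(\psi\mapsto \varphi\psi)$ by the theorem of the Case $m=r$ subsection.

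Next, the vertical injections. Since $k\leq n$, the sheaf inclusion $\mathcal{C}^n_X\hookrightarrow \mathcal{C}^k_X$ induces, by post-composition, the right-hand injection $\mathcal{H}om_{\mathbb{R}_X}(\mathcal{C}^m_X,\mathcal{C}^n_X)\hookrightarrow \mathcal{H}om_{\mathbb{R}_X}(\mathcal{C}^m_X,\mathcal{C}^k_X)$; and since any differential operator of order $\leq m-n$ with $C^n$ coefficients is, in particular, an operator of order $\leq m$ with $C^k$ coefficients, the left-hand injection $\mathcal{D}^{m-n,n}_X\hookrightarrow \mathcal{D}^{m,k}_X$ is the obvious forgetful map. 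Commutativity of the square is then transparent: both horizontal arrows send $P$ to the functional $\varphi\mapsto P(\varphi)$, and both vertical arrows amount to regarding the same operator (respectively the same functional) in a weaker-regularity ambient sheaf, so the two composite paths around the square produce the same functional on $\mathcal{C}^m_X$-sections. The only point that requires attention is the case split $m<n$ versus $m=n$ in the top row; once the correct prior theorem is invoked in each subcase, there is no genuine obstacle.
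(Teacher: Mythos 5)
Your assembly of the previously established cases (Theorem~\ref{theorem5} for the bottom row with $r=k<m$, the vanishing theorem of the case $m-r<0$ when $m<n$, the $\mathcal{H}om_{\mathbb{R}_X}(\mathcal{C}^m_X,\mathcal{C}^m_X)\simeq\mathcal{C}^m_X$ identification when $m=n$, and the evident forgetful/post-composition injections for the vertical arrows) is correct and is precisely the intended argument: the paper offers no separate proof of this theorem, presenting it only as a diagrammatic summary of the results already proved. Your case split $m<n$ versus $m=n$ in the top row and the commutativity check are exactly what such a proof requires.
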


%
%

\addcontentsline{toc}{section}{REFERENCES}

\noindent P.P. Ntumba\\{Department of Mathematics and
Applied Mathematics}\\{University of Pretoria}\\ {Hatfield 0002,
Republic of South Africa}\\{Email: patrice.ntumba@up.ac.za}


\end{document}